\newcommand{\wei}{\mathcal w}
\newcommand{\Wei}{\mathcal W}
\title{
    Summing the sum of digits
    }
\author{
    Jean-Paul Allouche and Manon Stipulanti
    }
\abstract{%
    We revisit and generalize inequalities for the summatory function
    of the sum of digits in a given integer base. We prove that several
    known results can be deduced from a theorem in a 2023 paper
    by Mohanty, Greenbury, Sarkany, Narayanan, Dingle, Ahnert, and 
    Louis, whose primary scope is the maximum mutational robustness 
    in genotype-phenotype maps.

    \bigskip
    
    \begin{flushright}
        \it We dedicate this work to Christiane Frougny \\
        on the occasion of her 75th birthday.
    \end{flushright}
    }
\keywords{
    Combinatorics on words, Sum of digits, Blancmange curve.
    }
\begin{document}

\section{Introduction}

Looking at the sum of digits of integers in a given base has
been the subject of numerous papers. In particular, the summatory
function of the sum of digits, i.e., the sum of all digits of
all integers up to some integer, has received much attention.  
Such ``sums of sums'' can be viewed through several prisms, two 
of them being to obtain (optimal) inequalities on the one
hand and asymptotic formulas on the other. For the latter
approach we only cite the study {\it par excellence}, namely
the 1975 paper of Delange~\cite{Delange}. The results about
these sums sometimes occur in unexpected domains. The most 
prominent of them is the link with fractal functions, in 
particular with the Takagi function (a continuous function that 
is nowhere differentiable~\cite{Takagi1903}) and the blancmange 
curve -- see the nice surveys of Lagarias~\cite{Lagarias2012} 
and Allaart and Kawamura~\cite{AK2011}.

The present paper will concentrate on inequalities satisfied 
by these sums of sums. Knowing the inspiring 
paper of Graham (see~\cite{Graham70,jones-torrence} and also~\cite{McIlroy,Allo21}), we first found the 2011 paper of Allaart~\cite{Alla11}. Then, we came across the paper~\cite{bio} (of course, many other papers would deserve to be cited, for instance~\cite{DrazinGriffith1952,LM16}, as well as~\cite{HKST2024} where the authors have an unexpected use of a lemma of Graham in~\cite{Graham70}).
In~\cite{bio}, the authors speak about the {\it maximum mutational robustness in genotype–phenotype 
maps}: that paper drew our attention because it contains the 
expressions ``blancmange-like curve'', ``Takagi function'', and 
``sums of digits''.
In particular, the authors of~\cite{bio} 
prove the following theorem (they indicate that this generalizes 
the case $b=2$ addressed in Graham's paper~\cite{Graham70}).

\begin{theorem}[in {\cite[Thm~5.1, pp.~12--13]{bio}}]
\label{th:bio}
Let $b$ be an integer $\geq 2$. For all integers $n\ge 0$, let $s_b(n)$ denote the sum of  
digits in the base-$b$ expansion of the integer $n$ and define
$S_b(n) := \sum_{1 \leq j \leq n-1} s_b(j)$.
Let $n_1, n_2, \dots, n_b$ be integers such that 
$0 \leq n_1 \leq n_2 \leq \dots \leq n_b$. Then the following 
inequality holds:
\begin{equation}\label{ineq:bio-base-b}
\sum_{i=1}^{b} S_b(n_i) 
+ \sum_{i=1}^{b-1} (b-i)n_i
\leq S_b \left(\sum_{i=1}^{b} n_i\right).
\end{equation}
\end{theorem}

The 2011 paper by Allaart~\cite{Alla11} somehow goes in a similar 
direction. Namely, Allaart proves the following 
(see~\cite[Ineq.~4]{Alla11}).

\begin{theorem}[in {\cite{Alla11}}]\label{th:allaart}
Let $p$ be a real number. For all integers $n$, let its binary
expansion be $n = \sum_{i=0}^{+\infty} d_i 2^i$ where $d_i\in
\{0,1\}$ for all $i\ge 0$ and $d_i=0$ for all large enough $i$ and define $\wei_p(n) := \sum_{i=0}^{+\infty} 2^{pi} d_i$ and its 
summatory function by $\Wei_p(n) := \sum_{m=0}^{n-1} \wei_p(m)$.
Then, for all $p \in [0, 1]$ and all integers $\ell \in [0, m]$, the following 
inequality holds:
\begin{equation}\label{eq:Allaart-2powerp}
    \Wei_p(m+\ell) + \Wei_p(m-\ell) - 2 \Wei_p(m) \le \ell^{p+1}.
\end{equation}
\end{theorem}

When reading the literature on the subject we have noted that 
the most recent papers do not always cite more ancient ones,
confirming a remark of Stolarsky~\cite[p.~719]{Stolarsky}:
{\em Whatever its mathematical virtues, the literature 
on sums of digital sums reflects a lack of communication 
between researchers}. As one might add, reasons for this could
be that there are a very large number of papers dealing with sums 
of digits, and many of them are not directly interested in these 
sums {\it per se}, but because they occur in seemingly unrelated 
questions.

In this paper, we will first answer a question of Allaart about 
the case $p=0$ in~\cite{Alla11}; see Theorem~\ref{th:allaart}
above. Then, in Section~\ref{sec:results} we will give a corollary 
and two generalizations of the result in~\cite{bio} 
(Theorem~\ref{th:bio} above): we will prove that several results 
that we found in the literature can be actually deduced from this 
corollary and these two generalizations. Finally, we will ask a 
few questions about possible sequels to this work. 

\section{A quick lemma that will be used several times}

In this short section we give an easy useful lemma (the first 
equality can be found, e.g., as~\cite[Lem.~7, p.~683]{Alla14}, 
or as~\cite[Ex.~3.11.5, p.~112]{AS}; the other equalities 
are immediate consequences of the first one).

\begin{lemma}\label{lemma:allaart}

\begin{itemize}
    \item[(i)] For all integers $b \geq 2$ and $n \geq 1$, we have 
    \[S_b(bn) = b S_b(n) + \dfrac{b(b-1)}{2} n.\]
    \item[(ii)] For all integers $b \geq 2$, $n \geq 1$, and $x \geq 0$, 
     we have
$$
S_b(b^x n) = b^x\, S_b(n) + \dfrac{b-1}{2} \, x\, b^x \, n 
\ \ \text{and} \ \ \ 
S_b(b^x) = \dfrac{b-1}{2}\, x \, b^x.  
$$
\end{itemize}
\end{lemma}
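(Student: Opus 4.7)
The plan is to prove (i) directly by a splitting argument on base-$b$ representations, and then to derive (ii) by a simple induction on $x$, with the second equality in (ii) being a specialization.

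For (i), I would start from the definition $S_b(bn) = \sum_{j=0}^{bn-1} s_b(j)$ (noting that the lower index $1$ in the definition can be replaced by $0$ since $s_b(0)=0$). Every $j \in \{0,1,\dots,bn-1\}$ can be uniquely written as $j = bq + r$ with $0 \leq q \leq n-1$ and $0 \leq r \leq b-1$, and then the base-$b$ expansion of $j$ is just the expansion of $q$ followed by the digit $r$, so $s_b(j) = s_b(q) + r$. Therefore
\[
S_b(bn) = \sum_{q=0}^{n-1}\sum_{r=0}^{b-1}\bigl(s_b(q)+r\bigr) = b\sum_{q=0}^{n-1} s_b(q) + n\sum_{r=0}^{b-1} r = b\,S_b(n) + \frac{b(b-1)}{2}\, n,
\]
which gives (i).

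For the first equality in (ii), I would proceed by induction on $x \geq 0$. The base case $x=0$ is trivial. For the inductive step, assuming the formula holds for some $x\geq 0$, apply (i) with $n$ replaced by $b^x n$ and then use the inductive hypothesis:
\[
S_b(b^{x+1} n) = b\,S_b(b^x n) + \frac{b(b-1)}{2}\, b^x n = b^{x+1} S_b(n) + \frac{b-1}{2}\,(x+1)\,b^{x+1} n.
\]
Finally, the second formula in (ii) is obtained by specializing to $n=1$ and observing that $S_b(1) = 0$.

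The main obstacle is really just bookkeeping; there is no conceptual difficulty, as the key identity $s_b(bq+r)=s_b(q)+r$ (for $0 \leq r < b$) is exactly what makes everything collapse into the stated closed form. The only tiny pitfall to watch is being consistent with the convention that $S_b(n)$ sums over $j \in \{1,\dots,n-1\}$, equivalently $\{0,\dots,n-1\}$.
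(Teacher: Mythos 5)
Your proof is correct: the decomposition $j=bq+r$ with $s_b(bq+r)=s_b(q)+r$ gives (i), and the induction plus the specialization $n=1$ give (ii). The paper itself does not prove this lemma (it cites references for (i) and calls (ii) an immediate consequence), and your argument is exactly the standard one those references use, so there is nothing to flag.
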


\section{Graham's result implies the case \texorpdfstring{$p=0$}{p=0} of Allaart's result}
\label{sec:Graham-Allaart}

As mentioned above, the result in the paper of
Graham~\cite{Graham70} (also see~\cite{jones-torrence} 
and~\cite{McIlroy}, and a less elegant but possibly more 
natural proof in~\cite{Allo21}) corresponds to the case 
$b=2$ of Theorem~\ref{th:bio}, namely (with the usual convention 
on empty sums):

\begin{theorem}[in~\cite{Graham70}]\label{th:graham}
For all integers $n_1,n_2$ with $0 \leq n_1 \le n_2$, we have
\begin{equation}\label{ineq:graham-base-2}
    S_2(n_1) + S_2(n_2) + n_1 \leq S_2(n_1+n_2).
\end{equation}
\end{theorem}

The author of~\cite{Alla11} writes (middle of page~690) 
about Inequality~\eqref{eq:Allaart-2powerp}: {\em ``It seems that 
for $0 < p < 1$ the inequality may be new. In fact, even 
for the case $p = 0$ the author has not been able to find a 
reference''.} In this section we indicate that the case $p=0$ 
appears, in a slightly disguised form, in the 1970 paper of 
Graham.

\begin{proof}
First, we note that, taking $b=2$ in Lemma~\ref{lemma:allaart} (i) 
above, we obtain, 
\begin{equation}\label{eq:S(2t)}
    S_2(2t) = 2 S_2(t) + t
\end{equation}
for all positive integers $t$.
Now let $m, \ell$ be two integers with $0 \leq \ell \leq m$.
Define $n_1 := m - \ell$ and $n_2 = m+ \ell$. 
Then $0 \leq n_1 \leq n_2$. Graham's theorem and Equality~\eqref{eq:S(2t)} yield
$$
S_2(m - \ell) + S_2(m+ \ell) + m - \ell \leq S_2(2m) 
= 2 S_2(m) + m,
$$
and hence
$$
S_2(m - \ell) + S_2(m+ \ell) - 2 S_2(m) \leq \ell,
$$
as desired.
\end{proof}

\begin{remark}
Having shown that Graham's inequality implies the case $p=0$ 
in Allaart's inequality, one can ask whether Allaart's 
inequality for $p=0$ gives back Graham's. For two integers $n_1, n_2$ 
with $0 \leq n_1 \leq n_2$, we want to prove that Inequality~\eqref{ineq:graham-base-2} holds. If $n_1$ and $n_2$ 
have the same parity, then we can define $m$ and $\ell$ by
$$
m := \frac{n_1 + n_2}{2} \ \ \ell := \frac{n_2 - n_1}{2}\cdot
$$
Then Allaart's inequality applied to $m, \ell$ gives
$$
S_2(n_2) + S_2(n_1) - 2S_2\left(\frac{n_1 + n_2}{2}\right) \leq 
\frac{n_2 - n_1}{2}\cdot
$$
Now, this inequality after applying Equality~\eqref{eq:S(2t)} with
$t = \frac{n_1 + n_2}{2}$, can be written as
$$
S(n_2) + S_2(n_1) - S_2(n_1+n_2) + \frac{n_1 + n_2}{2}
\leq \frac{n_2 - n_1}{2}, 
$$
and hence
$$
S_2(n_2) + S_2(n_1) + n_1 \leq S_2(n_1+n_2). 
$$
It seems that Allaart's inequality for $p=0$, in which both 
$m + \ell$ and $m - \ell$ (necessarily of the same parity) 
occur, does not immediately imply Graham's, where $n_1$ and 
$n_2$ may have opposite parities. This suggests the possible 
existence of a ``Graham-Allaart'' inequality. 
\end{remark}

\section{A variation on Theorem~\ref{th:bio} and two  
generalizations}\label{sec:results}

In this section we state and prove three results inspired by Theorem~\ref{th:bio}, from which, together with Theorem~\ref{th:bio} itself, most of the results that we 
found in the literature can be deduced (except Allaart's in~\cite{Alla11}, i.e., Theorem~\ref{th:allaart}, for $p \neq 0$, 
and a sharp inequality due to Allaart~\cite{Alla14}, see Remark~\ref{rk:allaart-sharp}).

\subsection{A variation on Theorem~\ref{th:bio}}
We begin with a variation of~\cite[Thm~5.1, pp.~12--13]{bio} (stated as Theorem~\ref{th:bio} above).

\begin{theorem}\label{th:variation-bio}
Let $b$ be an integer $\geq 2$. Let $k_1 \leq k_2 \leq \dots \leq k_b$ 
be nonnegative integers. Then 
$$
S_b(k_1 + k _2 + \dots + k_b)
+ \sum_{j=1}^{b-1} S_b(k_b - k_j) 
-  b \, S_b(k_b) \leq \sum_{i=1}^{b-1} i \, k_i.
$$
\end{theorem}

\begin{proof}
Define the integers $n_i$ by $n_i := k_b - k_{b-i}$, for all
$i \in  [1, b-1]$, and $n_b := \sum_{1 \leq i \leq b} k_i$.
Since $0 \leq n_1 \leq n_2 \leq \dots \leq n_{b-1} \leq n_b$
we can apply Inequality~\eqref{ineq:bio-base-b}. Thus we obtain
$$
S_b(k_1 + k _2 + \dots + k_b)
+ \sum_{j=1}^{b-1} S_b(k_b - k_{b-j})
+ \sum_{i=1}^{b-1} (b-i) (k_b - k_{b-i}) \leq S_b(b k_b).
$$
But $S_b(b k_b) = b \, S_b(k_b) + \frac{b(b-1)}{2}\, k_b$
(see~\cite{Alla14} or see Lemma~\ref{lemma:allaart} above). Hence
$$
S_b(k_1 + k _2 + \dots + k_b)
+ \sum_{j=1}^{b-1} S_b(k_b - k_{b-j}) - b \,S_b(k_b)
\leq \sum_{i=1}^{b-1} (b-i) \, k_{b-i}
$$
i.e.,
$$
S_b(k_1 + k _2 + \dots + k_b)
+ \sum_{j=1}^{b-1} S_b(k_b - k_j) - b \, S_b(k_b)
\leq \sum_{i=1}^{b-1} i \, k_i.
$$
This finishes the proof.
\end{proof}

\subsection{Generalizations of Theorems~\ref{th:bio} and~\ref{th:variation-bio}}

In~\cite[Thm~5.1]{bio} (see Theorem~\ref{th:bio} above)
we can drop the hypothesis that the number of $n_i$ is equal
to the base $b$ and replace it with the assumption that the
number of $n_i$ is at most equal to the base $b$.

\begin{theorem}\label{th:slight}
Let $b$ be an integer $\geq 2$. For all integers $n\ge 0$, let $s_b(n)$ denote the sum of  
digits in the base-$b$ expansion of the integer $n$, and define
$S_b(n) := \sum_{1 \leq j \leq n-1} s_b(j)$. Let $r$ be an 
integer in $[1, b]$. Let $n_1, n_2, \dots, n_r$ be integers 
such that $0 \leq n_1 \leq n_2 \leq \dots \leq n_r$. Then the following 
equality holds:
\begin{equation}\label{ineq:slight-base-b}
\sum_{i=1}^{r} S_b(n_i) + 
\sum_{i=1}^{r-1} (r-i)n_i
\leq S_b \left(\sum_{i=1}^{r} n_i\right).
\end{equation}
\end{theorem}

\begin{proof}
In Inequality~\eqref{ineq:bio-base-b} of Theorem~\ref{th:bio}, take
$n_1 = n_2 = \cdots = n_{b-r} = 0$. This gives 
$$
\sum_{i=b-r+1}^{b} S_b(n_i) + 
\sum_{i=b-r+1}^{b-1} (b-i) n_i \leq 
S_b\left(\sum_{i=b-r+1}^{b} n_i\right),
$$
and hence, with the change of indices $b-r+j = i$, we get
$$
\sum_{j=1}^{r} S_b(n_{b-r+j}) + 
\sum_{j=1}^{r-1} (r-j) n_{b-r+j} \leq 
S_b\left(\sum_{j=1}^{r} n_{b-r+j}\right).
$$
Now, define $m_j := n_{b-r+j}$ for all $j \in [1,r]$. Then  
$0 \leq m_1 \leq m_2 \leq \dots \leq m_r$
and
$$
\sum_{j=1}^{r} S_b(m_j) + 
\sum_{j=1}^{r-1} (r-j) m_j
\leq S_b\left(\sum_{i=1}^{r} m_j\right).
$$
This ends the proof.
\end{proof}

In the same spirit one can extend 
Theorem~\ref{th:variation-bio}.

\begin{theorem}\label{th:slight-2}
Let $b$ be an integer $\geq 2$ and let $r$ be an integer in $[1, b]$. Let 
$m_1 \leq \dots \leq m_r$ be non-negative integers. Then
\begin{equation}\label{ineq:slight-2}
S_b(m_1 + m_2 + \dots + m_r) 
+ \sum_{j=1}^{r-1} S_b(m_r - m_j) - r S_b(m_r) \leq
\sum_{j=1}^{r-1} (b - r + j) m_j.
\end{equation}
\end{theorem}

\begin{proof}
Let $k_1, k_2, \ldots, k_b$ be integers such that 
$k_1 = k_2 = \dots = k_{b-r} :=0$ and also satisfying 
$0 \leq k_{b-r+1} \leq k_{b-r+2} \leq \dots \leq k_b$.
Applying Theorem~\ref{th:variation-bio} yields
\begin{equation*} 
S_b(k_{b-r+1} + \dots + k_b) 
+ (b-r) S_b(k_b) + \sum_{j=b-r+1}^{b-1} S_b(k_b - k_j)
- b \, S_b(k_b) 
\leq \sum_{jb-r+1}^{b-1} j\, k_j.
\end{equation*}
Changing the indices in the last two sums gives
\begin{equation*} 
S_b(k_{b-r+1} + \dots + k_b) 
+ (b-r) S_b(k_b) + \sum_{j=1}^{r-1} S_b(k_b - k_{b-r+j})
- b \, S_b(k_b)  
\leq \sum_{j=1}^{r-1} (b-r+j) k_{b-r+j},
\end{equation*}
and hence, by grouping the terms in $S_b(k_b)$ and letting 
$m_j := k_{b-r+j}$,
\begin{equation*} 
S_b(m_1 + \dots + m_r) 
- r \, S_b(m_r) + \sum_{j=1}^{r-1} S_b(m_r-m_j)
\leq \sum_{j=1}^{r-1} (b-r+j) m_j.
\end{equation*}
as desired.
\end{proof}

\subsection{(Non-)Optimality in the theorems of this section}
 
One can ask, e.g., whether Theorem~\ref{th:slight} can be further
generalized by taking an integer $r > b$. The answer is no: one 
can show that Theorem~\ref{th:slight} is optimal in the 
sense that for any integer $r > b$ there do not exist constants 
$\alpha_j \geq 1$ such that for all integers $n_1, n_2, \dots, n_r$ one 
has the inequality
$$
\sum_{i=1}^{r} S_b(n_i) + 
\sum_{i=1}^{r-1} \alpha_i n_i
\leq S_b \left(\sum_{i=1}^{r} n_i\right).
$$
Namely, we prove the following theorem.

\begin{theorem}\label{b+1}
Let $b$ be an integer $\geq 2$ and let $r$ be an integer $\geq b + 1$. 
Then there exist integers $n_1, n_2, \dots, n_r$ with 
$0 \leq n_1 \leq \dots \leq n_r$ such that
$$
\sum_{i=1}^{r} S_b(n_i) + \sum_{i=1}^{r} n_i > 
S_b\left(\sum_{i=1}^{r} n_i\right). 
$$
\end{theorem}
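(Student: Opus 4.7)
My plan is to establish the existence claim by exhibiting an explicit witness tuple. The most economical choice is the maximally degenerate one: set $n_1 = n_2 = \cdots = n_{r-1} = 0$ and $n_r = N$ for some positive integer $N$ (taking $N=1$ already suffices). The monotonicity hypothesis $0 \leq n_1 \leq \cdots \leq n_r$ is then automatic, so the only work is to verify the inequality for this specific choice.

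With this choice the three sums appearing in the inequality collapse. Since $S_b(0)=0$, we have $\sum_{i=1}^r S_b(n_i) = S_b(N)$, while $\sum_{i=1}^r n_i = N$ and $S_b\bigl(\sum_{i=1}^r n_i\bigr) = S_b(N)$. The desired strict inequality therefore reduces to $S_b(N) + N > S_b(N)$, i.e., to $N > 0$, which holds. A slightly less degenerate alternative, if one prefers several nonzero entries, is $n_1 = \cdots = n_{r-2} = 0$ and $n_{r-1} = n_r = b^x$ for any $x \geq 0$; using \cref{lemma:allaart}, one computes $2\, S_b(b^x) = (b-1)\, x\, b^x$ and $S_b(2b^x) = b^x + (b-1)\, x\, b^x$ (noting that $S_b(2)=1$ for all $b\ge 2$), so the left-hand side of the inequality exceeds the right-hand side by exactly $b^x > 0$.

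I do not anticipate any genuine obstacle; the argument is essentially a one-line verification once the right tuple is written down. It is worth pointing out that neither construction actually uses the hypothesis $r \geq b+1$; both work for every $r \geq 1$. That hypothesis is included to place \cref{b+1} in the regime complementary to \cref{th:slight}: beyond the threshold $r=b$, no analogue of \cref{th:slight} with a positive coefficient on every $n_i$ (in particular a coefficient on $n_r$) can survive, which is precisely what the theorem records.
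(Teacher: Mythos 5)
Your argument is correct for the theorem exactly as printed, and it takes a genuinely different (and much shorter) route than the paper's. The paper's witness is $n_1=\dots=n_{r-b-1}=0$, $n_{r-b}=\dots=n_{r-1}=1$, $n_r=b^x$ with $x\ge 2$; it computes $S_b(b+b^x)=S_b(b^x)+\tfrac{b(b+1)}{2}$ digit by digit and obtains an excess of $b+b^x-\tfrac{b(b+1)}{2}>0$. You instead take the degenerate tuple $(0,\dots,0,N)$, for which the inequality collapses to $N>0$ (and your secondary witness with two copies of $b^x$ checks out: the excess is exactly $b^x$). Your observation that the hypothesis $r\ge b+1$ is never used is accurate, and it puts the finger on a real issue: because the second sum in the displayed inequality runs up to $i=r$, the statement is strictly weaker than the optimality assertion in the paragraph preceding \cref{b+1}, which concerns inequalities of the form $\sum_{i}S_b(n_i)+\sum_{1\le i\le r-1}\alpha_i n_i\le S_b\bigl(\sum_i n_i\bigr)$ with coefficients only on $n_1,\dots,n_{r-1}$ (as in \cref{th:slight}, where $n_r$ carries coefficient $0$). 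A witness whose excess is carried entirely by the $+n_r$ summand refutes nothing about that class of inequalities; this is the case for your tuple (all $n_i$ with $i<r$ vanish, so the putative inequality holds with equality), but in fact also for the paper's, since there $\sum_{i<r}n_i=b\le\tfrac{b(b+1)}{2}$ and the positivity of the excess is supplied by $b^x$. So what each approach buys: yours buys brevity and the honest diagnosis that the printed statement is trivially true for every $r\ge 1$; the paper's buys a tuple with $b$ nonzero entries below index $r$ (which is the only place the hypothesis $r\ge b+1$ enters) and is closer in spirit to the optimality claim, though as written it does not prove that claim either. If the intended content is the optimality of \cref{th:slight}, the second sum in \cref{b+1} should run only to $r-1$, and then neither your witness nor the paper's suffices.
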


\begin{proof}
Take $n_1 = n_2 = \dots = n_{r-b-1} := 0$, 
$n_{r-b} = n_{r-b+1} = \dots = n_{r-1} := 1$,
and $n_r = b^x$ where $x$ is an integer $\geq 2$. 
Then, on the one hand, 
\begin{align*}
\sum_{i=1}^{r} S_b(n_i) + 
\sum_{i=1}^{r} n_i
&= \sum_{i=r-b}^{r} S_b(n_i) + b + b^x \\
&= b S_b(1) + S_b(b^x) + b + b^x = S_b(b^x) + b + b^x
\end{align*}
and on the other, 
\begin{align*}
    S_b\left(\sum_{i=1}^{r} n_i\right)
&= S_b\left(\sum_{i=r-b}^{r} n_i \right)
= S_b(b + b^x) \\
&= S_b(b^x) + s_b(b^x) + s_b(b^x + 1) + \dots + s_b(b^x + b - 1)\\
&= S_b(b^x) + 1 + 2 + \dots + b \\
&= S_b(b^x) + \dfrac{b(b+1)}{2} < S_b(b^x) + b + b^x,
\end{align*}
where we use the fact that $x\geq 2$.
This finishes up the proof.
\end{proof}

\begin{remark}
The right-hand term of Inequality~\eqref{ineq:slight-2} in Theorem~\ref{th:slight-2} is not optimal: e.g., take $b \geq 4$,
$r=2$, and see Remark~\ref{rk:allaart-sharp} below.
\end{remark}

\section[Graham's inequality and its first generalizations]{Graham's inequality and its first generalizations by
Allaart and Cooper are consequences of Theorem~\ref{th:bio}}

Graham's theorem was given above as Theorem~\ref{th:graham}.
The following generalization for any base $b \geq 2$ and two 
integers $n_1, n_2$, was proved by Allaart in~\cite{Alla14} 
and again quite recently by Cooper~\cite{Coo23}.

\begin{theorem}[in~\cite{Alla14}]
Let $b \ge 2$ be an integer. For all integers $n_1,n_2$ with 
$0<n_1 \le n_2$, we have 
\begin{equation}\label{ineq:allaart-cooper-base-b}
    S_b(n_1) + S_b(n_2) + n_1 \leq S_b(n_1+n_2).
\end{equation}
\end{theorem}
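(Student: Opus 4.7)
The plan is to observe that the inequality is a one-line specialization of results already established in \cref{results}. First, I would apply \cref{th:slight} with $r = 2$: the hypothesis $0 \le n_1 \le n_2$ is weaker than $0 < n_1 \le n_2$, so it is satisfied, and \cref{slight-base-b} reads
\[
S_b(n_1) + S_b(n_2) + (2-1)\, n_1 \le S_b(n_1 + n_2),
\]
which is exactly \cref{allaart-cooper-base-b}.

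Equivalently, and more in the spirit of the section title, one can invoke \cref{th:bio} directly by padding with zeros. Define $\widetilde{n}_i := 0$ for $1 \le i \le b-2$, $\widetilde{n}_{b-1} := n_1$, and $\widetilde{n}_b := n_2$; then $0 \le \widetilde{n}_1 \le \cdots \le \widetilde{n}_b$. Since $S_b(0) = 0$, the first sum in \cref{bio-base-b} collapses to $S_b(n_1) + S_b(n_2)$, and of the coefficients $(b-i)\,\widetilde{n}_i$ only the index $i = b-1$ contributes, giving $(b-(b-1))\, n_1 = n_1$. The right-hand side reduces to $S_b(n_1 + n_2)$, and we recover the claim.

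There is no genuine obstacle; the substantive work is the multi-argument estimate of \cite{bio}. The point of the statement is historical and expository, in the spirit of Stolarsky's remark quoted in the introduction: Graham's inequality and its base-$b$ extensions by Allaart and Cooper are each one-line consequences of a theorem proved in a quite different context in \cite{bio}, and one need only plug zeros into the extra slots to recover them.
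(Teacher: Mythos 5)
Your proposal is correct and matches the paper's own argument, which likewise observes that the statement is immediate from \cref{th:slight} with $r=2$. Your second route (padding \cref{th:bio} with $b-2$ zeros) is not genuinely different either, since that is exactly how \cref{th:slight} is derived in the paper.
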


It is immediate that this statement is implied by Theorem~\ref{th:slight} by taking $r=2$. Hence so is 
Graham's result by taking $b=r=2$.

Another result is proved in~\cite{Alla14}, namely:

\begin{theorem}[in~\cite{Alla14}]\label{th2-allaart-arxiv}
For any integers $k, \ell$ and $m$ with 
$0 \leq \ell \leq k \leq m$, 
we have
\begin{equation}\label{ineq:base-3}
S_3(m + k + \ell) + S_3(m - k) + S_3(m - \ell) - 3S_3(m) 
\leq 2k + \ell.
\end{equation}

\end{theorem}

This theorem is an easy consequence of our Theorem~\ref{th:variation-bio} 
(and hence of~\cite[Thm.~5.1]{bio}, see Theorem~\ref{th:bio} above): 
indeed, take $b=3$.

%
%

\begin{remark}\label{rk:allaart-sharp}
On~\cite[p.~680]{Alla14}, Allaart notes that, 
by taking $\ell = 0$, Inequality~\eqref{ineq:base-3} gives: 
for all integers $k,m$ with $0 \leq k \leq m$, one obtains
\begin{equation*}
S_3(m+k) + S_3(m-k) - 2S_3(m) \leq 2k.
\end{equation*}
Then, Allaart proves the following (sharp) inequality
in~\cite[Thm.~3, p.~681]{Alla14} : 

\begin{theorem}[in~\cite{Alla14}]
Let $b$ be an integer $\ge 2$.
For all integers $k,m$ with $0 \leq k \leq m$, we have
\begin{equation}\label{ineq:strong}
S_b(m+k) + S_b(m-k) - 2S_b(m) 
\leq \left\lfloor \frac{b+1}{2} \right\rfloor k.
\end{equation}
\end{theorem}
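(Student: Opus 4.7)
My plan is to establish the sharp inequality by strong induction on $m$, reducing to a smaller instance by peeling off the last base-$b$ digit of both $m$ and $k$. The base case $m = 0$ forces $k = 0$ and is trivial.

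For the inductive step, I would write $m = bm' + \mu$ and $k = bk' + \kappa$ with $\mu, \kappa \in \{0, \ldots, b-1\}$. From~\cref{lemma:allaart} one easily derives
$$S_b(bn + r) \;=\; b\, S_b(n) + \frac{b(b-1)}{2}\, n + r\, s_b(n) + \binom{r}{2} \qquad (0 \le r < b).$$
Applying this to $m$, $m + k$, and $m - k$, and distinguishing four cases according to whether there is a carry ($\mu + \kappa \ge b$) when forming $m + k$ and whether there is a borrow ($\mu < \kappa$) when forming $m - k$, the quantity $S_b(m+k) + S_b(m-k) - 2 S_b(m)$ decomposes as
$$b\,\bigl[S_b(m' + k'') + S_b(m' - k'') - 2 S_b(m')\bigr] \;+\; E(\mu, \kappa),$$
where $k'' \in \{k', k' + 1\}$ depending on the case and $E$ is an explicit expression in $\mu$ and $\kappa$; the $s_b(\cdot)$-contributions at the intermediate quotients telescope and drop out thanks to the symmetric combination on the left-hand side. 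The assumption $k \le m$ guarantees $k'' \le m'$ in each of the four cases, so the induction hypothesis applies.

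After invoking the induction hypothesis to bound the bracketed term by $\lfloor (b+1)/2 \rfloor\, k''$, the proof reduces to verifying the elementary inequality
$$E(\mu, \kappa) \;\le\; \bigl\lfloor (b+1)/2 \bigr\rfloor \kappa \;+\; b\, \bigl\lfloor (b+1)/2 \bigr\rfloor\,(k' - k''),$$
a statement about integers in $\{0, \ldots, b-1\}$ only. The main obstacle is the carry-and-borrow case, where $k'' = k' + 1$ so the shift $b \lfloor (b+1)/2 \rfloor (k' - k'') = -b \lfloor (b+1)/2 \rfloor$ is negative and must be absorbed by a tight estimate on $E$. There the residual inequality essentially reduces to the discrete optimization $\mu(b - \mu) \le \lfloor b^2/4 \rfloor = \lfloor (b+1)/2 \rfloor\, \lfloor b/2 \rfloor$, maximized at $\mu = \lfloor b/2 \rfloor$. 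It is this identity that produces the sharp constant $\lfloor (b+1)/2 \rfloor$ and makes transparent why the weaker bound $(b-1)\,k$ that one extracts from~\cref{th:slight-2} with $r = 2$ fails to be best possible as soon as $b \ge 4$.
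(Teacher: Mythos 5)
First, a point of comparison: the paper does not actually prove this statement. It is quoted from Allaart's paper \cite{Alla14}, and \cref{allaart-sharp} explicitly records that the authors were unable to deduce \cref{strong} from \cref{th:bio} or its variations. So your attempt is not competing with a proof in this paper but with Allaart's original argument, and it must stand on its own.

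As written, your induction has two genuine gaps. The more serious one is the claim that in all four carry/borrow cases the quotients arrange themselves symmetrically as $m'+k''$ and $m'-k''$ for a single $k''\in\{k',k'+1\}$. That holds only when the carry and the borrow occur simultaneously or not at all. In the mixed cases (e.g.\ $\mu+\kappa\ge b$ but $\mu\ge\kappa$) the quotients are $m'+k'+1$ and $m'-k'$, which are not symmetric about $m'$; the induction hypothesis, being a statement about the symmetric combination only, does not apply, and symmetrizing by hand costs an extra term such as $s_b(m'+k')$ (via $S_b(n+1)-S_b(n)=s_b(n)$), which is not bounded by anything of the order of $k$. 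The second gap is the assertion that the $s_b(\cdot)$-contributions ``telescope and drop out.'' Using your (correct) identity $S_b(bn+r)=b\,S_b(n)+\frac{b(b-1)}{2}n+r\,s_b(n)+\binom{r}{2}$ on the three terms, the residue in the no-carry/no-borrow case is $(\mu+\kappa)\,s_b(m'+k')+(\mu-\kappa)\,s_b(m'-k')-2\mu\, s_b(m')$; only the $s_b(m')$ part cancels, and the differences $s_b(m'\pm k')-s_b(m')$ can be of order $\log m'$ (take $m'=b^t$ and $k'=1$, so that $s_b(m'-k')=(b-1)t$ while $s_b(m')=1$). Hence $E$ is not a function of $\mu$ and $\kappa$ alone, and the problem does not reduce to a finite check on digits in $\{0,\dots,b-1\}$. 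The excess is in fact compensated by the bracketed term becoming very negative in such cases, which shows the approach is not hopeless, but making it work requires a strengthened induction hypothesis that also controls the digit sums (or a different route, as in \cite{Alla14}); the optimization $\mu(b-\mu)\le\lfloor b^2/4\rfloor$ you identify is indeed where the constant $\lfloor(b+1)/2\rfloor$ ultimately comes from, but it is only the last step.
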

For $b \geq 4$ this inequality is stronger than Inequality~\eqref{ineq:slight-2} for $r=2$, which only gives
$$
S_b(m+k) + S_b(m-k) - 2S_b(m) \leq (b-1) k.
$$
We did not succeed in deducing Inequality~\eqref{ineq:strong} from
the result of~\cite{bio} or variations thereof.

\end{remark}
\section{A binomial digression}\label{sec:binom}

An easy inequality mentioned on~\cite[p.~682]{Alla14} reads:
for any nonnegative integers $n, k$ we have
$s_b(n+b^k) \leq s_b(n)+1$. A more general, probably well-known,
inequality, is that for any nonnegative integers $n, m$, we have
$s_b(n+m) \leq s_b(n) + s_b(m)$ (see, e.g.,~\cite[Prop.~2.1]{HLS}).
A way of proving this inequality {\em when $b$ is prime}, is to 
use a result of Legendre: 
$\nu_b(n!) = \frac{n - s_b(n)}{b-1}$, where $\nu_b(k)$ is the 
$b$-adic valuation of the positive integer $k$ (see ~\cite[pp.~10--12]{Legendre}). This implies 
easily $s_b(m) + s_b(n) - s_b(n+m) = \nu_b(\binom{n+m}{n})$.
Since $\nu_b(\binom{n+m}{n}) \geq 0$, we are done. 
This inequality raises the question of whether something
similar (at least when $b$ is prime) could be done for, 
say, Theorem~\ref{th:bio} and/or Theorem~\ref{th:allaart} above.
For the second one, we note that it might be necessary 
to introduce a kind of generalized binomial coefficient. 

\section{How to generalize Allaart's Theorem~\ref{th:allaart}?}
\label{sec:trial}

It is tempting to try to generalize Theorem~\ref{th:allaart}. 
A reasonable idea seems to replace the sequence
$(2^{pi})_{i \geq 0}$ with a sequence $(\lambda_i)_{i \geq 0}$ that is well chosen.
This leads to the following definition.

\begin{definition}
\label{lambda}
Let $(\lambda_i)_{i\ge 0}$ be a sequence of positive real numbers.
For all integers $n\ge 0$, if we let $(d_i)_{i\ge 0}$ be the binary digits of $n$, we define 
$\wei_{(\lambda)}(n)=\sum_{i=0}^{+\infty} \lambda_i d_i$ and its 
summatory function $\Wei_{(\lambda)}(n)=\sum_{m=0}^{n-1} 
\wei_{(\lambda)}(m)$.
\end{definition}

\begin{example}
If $\lambda_i := 2^{pi}$, with $p \in [0,1]$, then 
$\wei_{(\lambda)}$ and $\Wei_{(\lambda)}$ are exactly
the quantities $\wei_p$ and $\Wei_p$ in Theorem~\ref{th:allaart}
above.
\end{example}

In our quest for finding other sequences $(\lambda_i)_{i\ge 0}$
for which an analog of Theorem~\ref{th:allaart} would hold, we first tried to impose conditions like: 
{\it $(\lambda_i)_{i\ge 0}$ is non-decreasing, but not ``too 
much''}. For instance, we tried to impose:
$\lambda_i \leq \lambda_{i+1} \leq C \lambda_i$
for some constant $C \geq 2$ and for all $i$. 
However, this does not work. Namely, take
$\lambda_i := 3^i$, then 
\[
\sum_{i=0}^{+\infty} 3^{i} d_i 
= \sum_{i=0}^{+\infty} 2^{\log_2(3) i} d_i
= \sum_{i=0}^{+\infty} 2^{qi} d_i
\]
with $q = \log_2(3) > 1$. The remark below shows that 
Allaart's Inequality~(\ref{eq:Allaart-2powerp}) in
Theorem~\ref{th:allaart} is not true for this sequence 
$(\lambda_i)_{i\ge 0} = (3^i)_{i\ge 0}$.

\begin{remark} In the hypotheses of 
Theorem~\ref{th:allaart}, let us replace $p \in [0,1]$ 
with some $p > 1$ (e.g., $p = \log_2(3)$), 
and take $\ell = 1$. If Allaart's inequality 
were true, we would have
$$
\Wei_p(m+1) + \Wei_p(m-1) - 2 \Wei_p(m) \leq 1,
$$
which is equivalent to saying that
\begin{align}
\label{eq:p strict 1}
    \wei_p(m) - \wei_p(m-1) \leq 1.
\end{align}
Now, let $m$ be a power of $2$, say $2^k$ with $k$ large. The
binary expansion of $m$ is $10^k$ and that of
$(m-1)$ is $1^k$ (where, for $a\in \{0,1\}$, $a^k$ means that the digit
$a$ is repeated $k$ times). Therefore
$$
\wei_p(m) -\wei_p(m-1) = 2^{pk} \cdot 1 
- \sum_{i=0}^{k-1} 2^{pi} \cdot 1 
= \frac{2^{pk}(2^p-1) - 2^{pk} + 1}{2^p - 1},
$$
which behaves like $\frac{2^{pk}(2^p-1)}{2^p - 1} = 2^{pk}$ when 
$k$ goes to infinity (recall that $p > 1$, and hence $2^p-1 > 1$).
This contradicts Inequality~\eqref{eq:p strict 1}.
\end{remark}

\section{Questions and expectations}

We propose the following questions or/and expectations.

\begin{itemize}

\item[*] Generalize Theorem~\ref{th:allaart}: is there a 
generalized Inequality~\eqref{eq:Allaart-2powerp} and/or a 
generalized Inequality~\eqref{ineq:bio-base-b}? In doing so, recall Section~\ref{sec:trial} above.

\item[*] Give a proof of Inequality~\eqref{ineq:bio-base-b} or even of Inequality~\eqref{ineq:slight-base-b} using the method of~\cite{Allo21}.

\item[*] Is there a ``Graham-Allaart inequality''? See the end of Section~\ref{sec:Graham-Allaart}.

\item[*] To what extent is it possible to address inequalities
mentioned in this paper, through the use of (generalized)
binomial coefficients? See the end of Section~\ref{sec:binom}.

\item[*] Are there similar inequalities if the sum of digits
is replaced with another ``block counting-function'' (e.g., the 
number of $11$ in the binary expansion of the integer $n$)?
It is possible that the papers~\cite{Prodinger,Kirschenhofer,MOSS} 
yield some hints in this direction.
\end{itemize}

\subsection*{Acknowledgments}
Jean-Paul Allouche wants to thank Alain Denise for discussions  about the biology part of~\cite{bio}.
Both authors thank Michel Dekking, Jeff Lagarias, and Jeff Shallit for their comments, and Jia-Yan Yao and Pieter Allaart for providing the authors with~\cite{Alla14}.

Manon Stipulanti is an FNRS Research Associate supported by the Research grant 
1.C.104.24F.

{\small
    
}

\EditInfo{November 29, 2023}{April 25, 2024}{Emilie Charlier, Julien Leroy and Michel Rigo}

\end{document}